\DeclareMathOperator{\tr}{tr}
\DeclareMathOperator{\R}{\mathbb{R}}
\DeclareMathOperator{\e}{e}
\DeclareMathOperator{\D}{\mathrm{d}\hskip-0.4ex}
\newcommand{\T}{\intercal}
\newcommand{\hl}[1]{\bgroup\markoverwith
  {\textcolor{#1}{\rule[-.5ex]{2pt}{2.5ex}}}\ULon}
\title{Cross-Gramian-Based Model Reduction: \\ A Comparison}
\author{Christian Himpe\thanks{Contact: \href{mailto:christian.himpe@uni-muenster.de}{\nolinkurl{christian.himpe@uni-muenster.de}}, 
                                        \href{mailto:mario.ohlberger@uni-muenster.de}{\nolinkurl{mario.ohlberger@uni-muenster.de}}, 
Institute for Computational and Applied Mathematics at the University of M\"unster, Einsteinstrasse~62, D-48149 M\"unster, Germany} \and Mario Ohlberger\footnotemark[1]}
\date{}
\newtheoremstyle{thm}{\topsep}{\topsep}{\normalfont \itshape}{}{\normalfont \bfseries}{}{\newline}{}
\theoremstyle{thm}
\newcounter{dummy}
\newtheorem{mytheorem}{Theorem}
\newtheorem{mylemma}{Lemma}
\newtheorem{mycorollary}{Corollary}
\newtheorem{mydefine}[dummy]{Definition}
\begin{document}

\vfill

\maketitle

\begin{abstract}\bfseries
As an alternative to the popular balanced truncation method, the cross Gramian matrix induces a class of balancing model reduction techniques.
Besides the classical computation of the cross Gramian by a Sylvester matrix equation,
an empirical cross Gramian can be computed based on simulated trajectories.
This work assesses the cross Gramian and its empirical Gramian variant for state-space reduction on a procedural benchmark based to the cross Gramian itself.
\end{abstract}
~\\
\textbf{Keywords:} Model Reduction, Model Order Reduction, Empirical Gramians, Cross Gramian, Empirical Cross Gramian

\section{Introduction}
The cross Gramian matrix is an interesting mathematical object with manifold applications in control theory, system theory and even information theory \cite{himpe15d}.
Yet, first and foremost the cross Gramian is used in the context of model order reduction.

The cross Gramian was introduced in \cite{fernando83a} for SISO (Single-Input-Single-Output) systems and extended in \cite{laub83,fernando85} to MIMO (Multiple-Input-Multiple-Output) systems as an alternative balancing method to the balanced truncation \cite{moore81} model reduction technique.
A data-driven variant of the cross Gramian, the empirical cross Gramian, was proposed in \cite{streif06} for SISO systems and extended in \cite{himpe14a} to MIMO systems, expanding the set of empirical Gramians \cite{lall99,lall02}.

Various approaches for cross-Gramian-based model reduction have been studied \cite{aldhaheri91,sorensen01,sorensen02,rahrovani14,himpe14a},
of which this work compares a small selection using a procedural benchmark based on a method to generate random systems introduced in \cite{smith03}.
In this setting, a linear time-invariant input-output system is the central object of interest:
\begin{align}\label{eq:linsys}
\begin{split}
 \dot{x}(t) &= Ax(t) + Bu(t), \\
       y(t) &= Cx(t) + Du(t),
\end{split}
\end{align}
which consists of a dynamical system and an output equation.
The associated vector field is given by a linear transformation of the state $x:\R \to \R^N$ by the system matrix $A \in \R^{N \times N}$,
and a source term introducing the input $u:\R \to \R^M$ through the input matrix $B \in \R^{N \times M}$.
The output $y:\R \to \R^Q$ is determined by an output functional consisting of a linear transformation of the state $x$ by the output matrix $C \in \R^{Q \times N}$,
and a term forwarding the input $u$ by the feed-through matrix $D \in \R^{Q \times M}$;
the latter is assumed to be trivial $D = 0$ in this contribution, as it does not affect the investigated model reduction procedures.

This work is structured as follows:
An outline of the cross Gramian is given in \href{sec:cg}{Section~\ref*{sec:cg}},
which is followed by a summary of the empirical cross Gramian in \href{sec:ecg}{Section~\ref*{sec:ecg}}.
The considered methods for cross-Gramian-based model reduction are presented in \href{sec:mor}{Section~\ref*{sec:mor}}.
In \href{sec:isp}{Section~\ref*{sec:isp}} the procedural benchmark is proposed,
and in \href{sec:num}{Section~\ref*{sec:num}} the considered methods are tested upon this benchmark.

\section{The Cross Gramian}\label{sec:cg}
Two operators play central role in the theory of systems:
The controllability operator $\mathcal{C}:L_2^M \to \R^N$ and the observability operator $\mathcal{O} : \R^N \to L_2^Q$:
\begin{align*}
 \mathcal{C}(u) = \int_{-\infty}^0 \e^{At}Bu(t) \D t, \qquad \mathcal{O}(x_0) = C\e^{At} x_0;
\end{align*}
the former measures how much energy introduced by $u$ is needed to drive $x$ to a certain state,
the latter quantifies how well a change in the state $x$ is visible in the output $y$.
A composition of the observability with the controllability operator yields the Hankel operator\footnote{Commonly described by ``mapping past inputs to future outputs'' \cite{glover84} due to its interpretation as composition of the solution operator with a time-flipping operator \cite{gray98}.} $H:L_2^M \to L_2^Q$,
\begin{align*}
 H = \mathcal{O} \circ \mathcal{C},
\end{align*}
of which the singular values, the so called Hankel singular values, classify the states by importance in terms of the system's input-output coherence.  

The permuted composition of $\mathcal{C}$ with $\mathcal{O}$, that is only admissible for square systems\footnote{A square system has the same number of inputs and outputs $M=Q$.}, yields a cross operator $W_X:\R^N \to \R^N$. 
\begin{mydefine}\label{def:wx}
The composition of the controllability operator $\mathcal{C}$ with the observability operator $\mathcal{O}$ is called \textbf{cross Gramian}\footnote{Since the cross Gramian is generally neither symmetric nor positive semi-definite it is not a Gramian matrix but was introduced under this name in \cite{fernando83a}.} $W_X$:
\begin{align*}
 W_X := \mathcal{C} \circ \mathcal{O} = \int_0^\infty \e^{At} BC \e^{At} \D t \in \R^{N \times N}.
\end{align*}
\end{mydefine}
This cross Gramian concurrently encodes controllability and observability information of the underlying system.

An obvious connection between the Hankel operator and the cross Gramian is given by the equality of their traces\footnote{Similarly, the logarithm-determinants are equal: $\operatorname{logdet}(H) = \operatorname{logdet}(W_X)$, which is the basis for the cross-Gramian-based information index \cite{fu09} measuring information entropy.}:
\begin{align*}
 \tr(H) = \tr(\mathcal{OC}) = \tr(\mathcal{CO}) = \tr(W_X).
\end{align*}

Yet, a central property of the cross Gramian is only available for symmetric systems\footnote{A symmetric system has a symmetric Hankel operator $H =H^*$.}.
\begin{mylemma}
For a symmetric system the absolute values of the eigenvalues of the cross Gramian are equal to the Hankel singular values:
\begin{align*}
 \sigma_i(H) = |\lambda_i(W_X)|.
\end{align*}
\end{mylemma}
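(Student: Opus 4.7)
The plan is to reduce the statement to the identity $W_X^2 = W_C W_O$, where $W_C = \int_0^\infty \e^{At}BB^\T\e^{A^\T t}\D t$ and $W_O = \int_0^\infty \e^{A^\T t}C^\T C\e^{At}\D t$ are the usual controllability and observability Gramians. Once this identity is in hand, the conclusion follows quickly: the Hankel singular values satisfy $\sigma_i(H)^2 = \lambda_i(\mathcal{C}^*\mathcal{O}^*\mathcal{O}\mathcal{C}) = \lambda_i(W_C W_O)$, so $\sigma_i(H)^2 = \lambda_i(W_X^2) = \lambda_i(W_X)^2$, and taking nonnegative square roots gives $\sigma_i(H) = |\lambda_i(W_X)|$.

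To obtain $W_X^2 = W_C W_O$, I would first extract from the hypothesis $H = H^*$ an algebraic state-space characterization of symmetry. The standard one, which I would invoke (or briefly derive by matching transfer functions $C(sI-A)^{-1}B = B^\T(sI-A^\T)^{-1}C^\T$ and realising both on the same minimal state), is the existence of an involution $J=J^\T$, $J^2 = I$, satisfying
\begin{align*}
JA = A^\T J, \qquad JB = C^\T, \qquad CJ = B^\T.
\end{align*}
This is the only nontrivial ingredient and is what I expect to be the main obstacle; the rest is bookkeeping.

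With $J$ available, the key manipulation is to insert $J^2 = I$ into the Gramians and push $J$ through the matrix exponential via $\e^{A^\T t} = J\e^{At}J$. Concretely,
\begin{align*}
W_C = \int_0^\infty \e^{At} B (CJ)\, J \e^{At} J \D t = \left(\int_0^\infty \e^{At} BC \e^{At}\D t\right) J = W_X J,
\end{align*}
and analogously $W_O = J W_X$ by factoring $C^\T = JB$ on the left. Multiplying yields $W_C W_O = W_X J \cdot J W_X = W_X^2$, which is exactly the identity needed.

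Combining the two steps closes the argument. The only subtlety worth flagging is sign: since eigenvalues of $W_X$ may be negative while $\sigma_i(H)\geq 0$, one indeed obtains absolute values, consistent with the statement. Minimality of the realisation (or at least the usual stability/controllability/observability assumptions that guarantee convergence of the integrals and the spectral identifications) is tacitly assumed; otherwise both sides should be read up to zero eigenvalues.
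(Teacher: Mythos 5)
Your overall strategy is sound and genuinely different from the paper's, but the one ingredient you yourself flag as the main obstacle is false as stated: for a general minimal symmetric realization there is \emph{no} involution $J$ with $J^2=I$ satisfying $JA=A^\T J$, $JB=C^\T$, $CJ=B^\T$. Take the stable, minimal, SISO (hence symmetric) system $A=\operatorname{diag}(-1,-2)$, $B=(1,1)^\T$, $C=(1,2)$: the condition $JA=A^\T J=AJ$ forces $J$ to be diagonal, and $JB=C^\T$ then forces $J=\operatorname{diag}(1,2)$, which is not an involution and also violates $CJ=B^\T$. What symmetry of the impulse response actually yields (for a minimal realization, via uniqueness of the similarity between $(A,B,C)$ and $(A^\T,C^\T,B^\T)$) is a symmetric \emph{invertible} symmetrizer $T=T^\T$ with $TA=A^\T T$, $TB=C^\T$ and $CT^{-1}=B^\T$; normalizing $T$ to a signature matrix requires a further change of state coordinates, which would change $W_X$, $W_C$, $W_O$ themselves. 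Fortunately your computation survives with the general $T$: substituting $\e^{A^\T t}=T\e^{At}T^{-1}$, $B^\T=CT^{-1}$ and $C^\T=TB$ gives $W_C=W_XT^{-1}$ and $W_O=TW_X$, so $W_CW_O=W_X^2$ still holds, and with $\sigma_i(H)^2=\lambda_i(W_CW_O)$ the conclusion follows. So the gap is repairable by deleting the requirement $J^2=I$ and carrying $T^{-1}$ through; as written, however, the proof rests on an object that need not exist.

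For comparison, the paper's proof is much shorter and stays entirely at the operator level: from $H=H^*$ it writes $\sigma_i(H)^2=\lambda_i(H H^*)=\lambda_i((\mathcal{O}\mathcal{C})^2)$, uses the fact that the nonzero spectra of $XY$ and $YX$ coincide to obtain $\lambda_i((\mathcal{O}\mathcal{C})^2)=\lambda_i((\mathcal{C}\mathcal{O})^2)=\lambda_i(W_X^2)$, and concludes. That route uses the hypothesis $H=H^*$ exactly once and never unpacks it into matrix identities, whereas your route, once repaired, has the side benefit of producing the structural identities $W_C=W_XT^{-1}$, $W_O=TW_X$ and $W_X^2=W_CW_O$, which are of independent interest for relating cross-Gramian balancing to balanced truncation. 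Both arguments share the tacit final step $\sqrt{\lambda_i(W_X^2)}=|\lambda_i(W_X)|$, which requires the eigenvalues of $W_X$ to be real; this does follow here, since $\lambda_i(W_X^2)=\lambda_i(W_CW_O)\geq 0$ excludes nonzero purely imaginary eigenvalues.
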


This property is expanded to orthogonally symmetric systems in \cite{deabreu86}.

\begin{proof}
 A symmetric system has a symmetric Hankel operator:
 \begin{align*}
  H = H^* \Rightarrow \mathcal{OC} = (\mathcal{OC})^*.
 \end{align*}
 Hence, for the singular values of the Hankel operator holds:
 \begin{align*}
  \sigma_i(H) &= \sigma_i(\mathcal{OC})
              = \sqrt{\lambda_i(\mathcal{OC}(\mathcal{OC})^*)}
              = \sqrt{\lambda_i(\mathcal{OCOC})} \\
              &\stackrel{\text{\cite{hladnik88}}}{=} \sqrt{\lambda_i(\mathcal{COCO})}
              = \sqrt{\lambda_i(W_X W_X)}
              = |\lambda_i(W_X)|.
 \end{align*}
\end{proof}

Classically, to compute the cross Gramian, a relation to the solution of a matrix equation is exploited.
\begin{mylemma}\label{lm:syl}
The cross Gramian is the solution to the Sylvester matrix equation:
\begin{align}\label{eq:syl}
 A W_X + W_X A = -BC.
\end{align}
\end{mylemma}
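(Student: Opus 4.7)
The plan is to verify the identity directly by substituting the integral definition of $W_X$ into the left-hand side of \eqref{eq:syl} and recognizing the integrand as an exact derivative.

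First I would form
\begin{align*}
 A W_X + W_X A = \int_0^\infty \bigl( A \e^{At} BC \e^{At} + \e^{At} BC \e^{At} A \bigr) \D t,
\end{align*}
using that $A$ commutes with $\e^{At}$ so that $A$ may be moved inside the integral on either side. Then I would observe that the bracketed integrand is exactly $\frac{\D}{\D t}\bigl[\e^{At} BC \e^{At}\bigr]$ by the product rule, so the fundamental theorem of calculus gives
\begin{align*}
 A W_X + W_X A = \bigl[\e^{At} BC \e^{At}\bigr]_0^\infty = \lim_{t \to \infty} \e^{At} BC \e^{At} - BC.
\end{align*}

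The final step would be to argue that the boundary term at infinity vanishes, yielding $-BC$ as required. This is where the main (and only) subtlety lies: for the definition of $W_X$ in Definition~\ref{def:wx} to make sense as an improper integral at all, one needs exponential decay of $\e^{At}$, i.e.\ the matrix $A$ must be Hurwitz (all eigenvalues in the open left half-plane). Under this standing assumption, which is already implicit in the convergence of the integral defining $W_X$, both $\e^{At}$ and its transpose tend to zero as $t \to \infty$ and the limit vanishes.

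I would expect the main obstacle to be presentational rather than mathematical: making clear that the Hurwitz hypothesis is the same one already needed to define $W_X$, and justifying the interchange of $A$ with the integral (which is immediate from linearity and absolute convergence guaranteed by the spectral condition on $A$). No separate uniqueness claim is needed for the lemma as stated, but if desired one could additionally invoke the standard fact that the Sylvester equation $A X + X A = -BC$ has a unique solution whenever $\lambda_i(A) + \lambda_j(A) \neq 0$ for all $i,j$, which is automatic in the Hurwitz case.
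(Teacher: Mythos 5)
Your proof is correct, but it takes a different route from the paper: the paper disposes of this lemma in one line by citing it as a special case of a general theorem of Lancaster on explicit integral representations of solutions to matrix equations of the form $AX + XB = C$, whereas you verify the identity directly by differentiating $\e^{At}BC\e^{At}$ under the integral and applying the fundamental theorem of calculus. Your argument is self-contained and more elementary, and it has the virtue of making explicit exactly where the stability hypothesis enters (the vanishing of the boundary term at infinity, which is the same Hurwitz condition already needed for the integral defining $W_X$ to converge); the paper's citation buys brevity and, implicitly, the uniqueness of the solution, which Lancaster's theorem supplies under the spectral condition $\lambda_i(A)+\lambda_j(A)\neq 0$ that you correctly note is automatic for Hurwitz $A$. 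Since the lemma as stated only asserts that $W_X$ \emph{is a} solution, your omission of a uniqueness argument is not a gap.
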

\begin{proof}
This is a special case of \cite[Theorem~5]{lancaster70}
\end{proof}

\section{Empirical Cross Gramian}\label{sec:ecg}
An alternative approach to the computation of the cross Gramian via a matrix equation is the computation of its empirical variant.
Empirical Gramians \cite{lall99,lall02} result from (numerically obtained) trajectory data.
A justification for this approach is given by the definition of the cross Gramian,
\begin{align*}
 W_X = \int_0^\infty (\e^{At} B) (\e^{A^\T t} C^\T)^\T \D t,
\end{align*}
which can be interpreted as cross covariance matrix of the system's impulse response and adjoint system's impulse response.
As originally in \cite{moore81}, these impulse responses are trajectories,
\begin{align}
 \dot{x}(t) &= Ax(t) + B\delta(t) \Rightarrow x(t) = \e^{At} B, \notag \\
 \dot{z}(t) &= A^\T z(t) + C^\T \delta(t) \Rightarrow z(t) = \e^{A^\T t} C^\T, \notag \\
 \Rightarrow W_X &= \int_0^\infty x(t) z(t)^\T \D t, \label{eq:lwx}
\end{align} 
and yield an \textbf{empirical linear cross Gramian} \cite[Section~2.3]{baur16}.

A more general definition of the empirical cross Gramian \cite{streif06,himpe14a},
without relying on the linear structure of the underlying system\footnote{Such as a closed form for the adjoint system.} is then even applicable to nonlinear systems.
\begin{mydefine}
 For sets $\{c_k \in \R \setminus 0 : l = 1 \dots K\}$,  $\{d_k \in \R \setminus 0 : l = 1 \dots L\}$,
 the $m$-th $M$-dimensional standard base vector $e_{M,m}$ and the $j$-th $N$-dimensional standard base vector $e_{N,j}$, the \textbf{empirical cross Gramian} $\widehat{W}_X \in \R^{N \times N}$ is given by:
 \begin{align}\label{eq:nwx}
 \begin{split}
             \widehat{W}_X &:= \frac{1}{K L M} \sum_{k=1}^K \sum_{l=1}^L \sum_{m=1}^M \frac{1}{c_k d_l} \int_0^\infty \Psi^{klm}(t) \D t, \\
      \Psi_{ij}^{klm}(t) &= \langle x_i^{km}(t) - \bar{x}_i^{km}, y_m^{lj}(t) - \bar{y}_m^{lj} \rangle,
 \end{split}
 \end{align}
with $x_i^{km}$ being the $i$-th component of the state trajectory for the input $u^{km}(t) = c_k e_{M,m} \delta(t)$, zero initial state and $\bar{x}_i^{km}$ the associated temporal average state, while $y_m^{lj}$ is the $m$-th component of the output trajectory for the initial state $x_0^{lj} = d_l e_{N,j}$, zero input and $\bar{y}^{lj}$ the associated temporal average output.
\end{mydefine}
This empirical cross Gramian requires $K \cdot M$ state trajectories for perturbed impulse input with zero steady state, and
$L \cdot N$ output trajectories for perturbed initial states with zero input.
The sets $\{c_k\}$ and $\{d_l\}$ define the operating region of the underlying system and determine for which inputs and initial states the empircal cross Gramian is valid.
In \cite{himpe14a} the empirical cross Gramian is generalized to an empirical cross covariance matrix by admitting arbitrary input functions and centering the state and output trajectories about their respective steady state.
Furthermore, it is shown in \cite{himpe14a} that the empirical cross Gramian is equal to the cross Gramian in \href{def:wx}{Definition~\ref*{def:wx}} for linear systems \eqref{eq:linsys}.

\begin{mytheorem}
 For an asymptotically stable linear system, the empirical cross Gramian $\widehat{W}_X$ reduces to the cross Gramian $W_X$.
\end{mytheorem}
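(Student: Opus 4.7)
The strategy is to substitute the explicit closed-form trajectories of the asymptotically stable linear system \eqref{eq:linsys} into the empirical expression \eqref{eq:nwx} and observe that, thanks to linearity and to the vanishing of the temporal averages, the discrete sums collapse onto the integral in Definition~\ref{def:wx}.

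First I would exploit asymptotic stability. Because $A$ is Hurwitz, $\e^{At}$ decays exponentially, so both the impulse response $x^{km}(t)$ and the free response $y^{lj}(t)$ are absolutely integrable on $[0,\infty)$; consequently the temporal averages $\bar{x}_i^{km}$ and $\bar{y}_m^{lj}$ are zero and $\Psi_{ij}^{klm}(t)$ reduces to the pointwise scalar product $x_i^{km}(t)\, y_m^{lj}(t)$. This is also what guarantees that the outer integral actually converges, so every object in \eqref{eq:nwx} is well-defined.

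Next I would write the two families of trajectories explicitly. Linearity of \eqref{eq:linsys} yields $x^{km}(t) = c_k\,\e^{At} B e_{M,m}$ and $y^{lj}(t) = d_l\, C\e^{At} e_{N,j}$, hence the componentwise identities $x_i^{km}(t) = c_k (\e^{At} B)_{im}$ and $y_m^{lj}(t) = d_l (C\e^{At})_{mj}$. Substituting into $\Psi_{ij}^{klm}$ the factors $c_k$ and $d_l$ pull out and cancel against the $1/(c_k d_l)$ weight, leaving an integrand independent of $k$ and $l$; the sums over $k$ and $l$ then merely reproduce the normalization. What remains, after exchanging the finite sum over $m$ with the integral, is $\sum_{m}(\e^{At} B)_{im}(C\e^{At})_{mj}$, which is by definition the $(i,j)$ entry of $\e^{At} B C \e^{At}$, and its integral over $[0,\infty)$ is $(W_X)_{ij}$ by Definition~\ref{def:wx}.

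The only delicate point is the first step: justifying that the infinite-horizon temporal averages vanish and that the improper integral converges. Both follow immediately from asymptotic stability, but they are the only places where that hypothesis is actually used; without them one would have to center against nontrivial steady states and the collapse of the sums would fail. Everything afterward is bookkeeping, verifying that the empirical construction is a linear combination of rank-one outer products which, once the perturbation scalings are undone, reassemble the matrix $BC$ sandwiched between the two exponentials.
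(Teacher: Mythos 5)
Your derivation is correct in substance and is, in effect, the only real proof on offer: the paper itself gives no argument for this theorem but simply defers to \cite[Lemma~3]{himpe14a}, and the verification carried out there is precisely the direct computation you describe --- insert the closed-form impulse response $x^{km}(t)=c_k\e^{At}Be_{M,m}$ and the free output response $y^{lj}(t)=d_l\,C\e^{At}e_{N,j}$ into \eqref{eq:nwx}, cancel the perturbation amplitudes against the $1/(c_kd_l)$ weights, and recognize $\sum_m(\e^{At}B)_{im}(C\e^{At})_{mj}$ as the $(i,j)$ entry of $\e^{At}BC\e^{At}$, whose integral is $W_X$ by Definition~\ref{def:wx}. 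Your emphasis on where asymptotic stability actually enters --- exponential decay of $\e^{At}$ gives integrability of the trajectories, hence convergence of the outer integral and vanishing of the infinite-horizon temporal averages, so the centering in $\Psi^{klm}$ is vacuous --- is exactly the right observation and is the only non-bookkeeping content of the statement. So the comparison is not one of competing methods: you supply a self-contained proof where the paper supplies a citation.

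One piece of bookkeeping does not close, however. With the normalization $\tfrac{1}{KLM}$ as written in \eqref{eq:nwx}, the sums over $k$ and $l$ contribute a factor $KL$ that cancels against $\tfrac{1}{KL}$, but the sum over $m$ is already consumed in assembling the matrix product $(\e^{At}BC\e^{At})_{ij}$, so a residual prefactor $\tfrac{1}{M}$ survives: your computation yields $\widehat{W}_X=\tfrac{1}{M}W_X$ rather than $W_X$ whenever $M>1$. Your sentence ``the sums over $k$ and $l$ then merely reproduce the normalization'' silently discards this factor. You should either point out that the normalization in the definition must be $\tfrac{1}{KL}$ for the stated equality to hold exactly (almost certainly the intent, and immaterial for the SISO experiments of Section~\ref{sec:num}, where $M=1$), or state the conclusion up to this constant --- which in any case rescales neither the singular vectors nor the induced projections. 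Apart from this constant, the argument is sound.
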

\begin{proof}
See \cite[Lemma~3]{himpe14a}
\end{proof}

Since this empirical cross Gramian requires merely discrete (output) trajectories and does not rely on the linear $\Sigma(A,B,C)$ structure of the system, it can be computed also for nonlinear systems.
Due to the only prerequisite of trajectory data, empirical Gramians are a flexible tool, but warrant prior knowledge on the operating region of the system to define the perturbations.
Hence, based on the idea of numerical linearization cf. \cite{moore79}, empirical Gramians give rise to a data-driven nonlinear model reduction  technique.

The empirical cross Gramian consists of inner products between state trajectories with perturbed input and output trajectories with perturbed initial state.
This allows, by treating the parameters as additional (constant) states, to extend the cross Gramian beyond state input-output coherence to include observability-based parameter identifiability information \cite{himpe14a}.
The associated empirical joint Gramian is an empirical cross Gramian that enables a combined state and parameter reduction from a single cross operator.

Furthermore, a cross Gramian for non-symmetric and also non-square systems \cite{himpe15a},
which can be efficiently computed in its empirical variant, expands the applicability of the cross Gramian to more general system configurations.

\section{Cross-Gramian-Based Model Reduction}\label{sec:mor}
Cross-Gramian-based model reduction is a projection-based approach. 
The state-space trajectory is approximated by a lower-dimensional trajectory,
which results from a reducing truncated projection $R \in \R^{n \times N}$ and a reconstructing truncated projection $S \in \R^{N \times n}$ for $n<N$:
\begin{align*}
 x_r(t) := R x(t) \Rightarrow x(t) \approx S x_r(t).
\end{align*}
Using such projections, a reduced order model for the full order system is given by:
\begin{align*}
 \dot{x}_r(t) &= R A S x_r(t) + R B u_r(t), \\ 
       y_r(t) &= C S x_r(t),
\end{align*}
and $x_{r,0} = R x_0$.
This can be simplified by $A_r := R A S$, $B_r := R B$, $C_r := C R$, due to the linear structure of the system:
\begin{align*}
 \dot{x}_r(t) &= A_r x_r(t) + B_r u(t), \\
       y_r(t) &= C_r x_r(t).
\end{align*}

To obtain such projections from the cross Gramian, various methods can be used.
The eigenvalue decomposition of the cross Gramian matrix,
\begin{align*}
 W_X \stackrel{\operatorname{EVD}}{=} T \Lambda T^{-1},
\end{align*}
given a symmetric system, yields a balancing projection $S := T$, $R :=T^{-1}$ \cite{aldhaheri91},
which can be truncated based on the absolute value of the magnitude of the eigenvalues $|\lambda_i| = |\Lambda_{ii}|$.
Alternatively, a singular value decomposition of the cross Gramian,
\begin{align*}
 W_X \stackrel{\operatorname{SVD}}{=} U \Sigma V,
\end{align*}
can be utilized.
Similarly, $S := U$ and $R := V$ can truncated based on the associated singular values $\sigma_i = \Sigma_{ii}$;
yet this projection is only approximately balancing \cite{sorensen02,rahrovani14} and the reduced order model's stability is not guaranteed to be preserved.

As a variant, only the left or right singular vectors can be used individually as a Galerkin projection,
\begin{align}\label{eq:dt}
 \begin{cases}
  S := U & R := U^\T \\
  S := V^\T & R := V.
 \end{cases}
\end{align}
This direct truncation \cite{himpe14a} is less accurate, but provides an orthogonal projection.

Lastly, we note that instead of truncating the decomposition derived projections based on the eigen- or singular values,
it is suggested in \cite{davidson86}, to use the quantities $d_i := |\tilde{b}_i \tilde{c}_i \lambda_i|$ and $\hat{d}_i := |\tilde{b}_i \tilde{c}_i \sigma_i|$ (compare \eqref{eq:h2}) for balanced and approximately balanced systems respectively,
which utilizes the columns of the (approximately) balanced input matrix $\tilde{b}_i$ and rows of the (approximately) balanced output matrix $\tilde{c}_i$.

\section{Inverse Sylvester Procedure}\label{sec:isp}
The proposed system generator is a special case of the inverse Lyapunov procedure \cite{smith03}.
This variant though generates exclusively state-space symmetric systems\footnote{For a state-space symmetric system $A=A^\T$ and $B = C^\T$ holds.},
which are found in applications such as RC circuits and have some interesting properties as shown \cite{liu98,reis14}.

We note that the cross Gramian, as an $N \times N$ dimensional linear operator $W_X : \R^N \to \R^N$, is an endomorphism.
This leads to the following relation between the system matrix $A$ and the cross Gramian matrix $W_X$, as stated in \cite{mironovskii15}:
\begin{mycorollary}
 Let $W_X$ be the cross Gramian to the system $(A,B,C)$.
 Then $A$ is \textbf{a} cross Gramian to the virtual system $(-W_X,B,C)$.
\end{mycorollary}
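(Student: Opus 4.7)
The strategy is to invoke Lemma~\ref{lm:syl} in reverse: that lemma identifies cross Gramians as solutions of a particular Sylvester-type matrix equation, so in order to exhibit $A$ as a cross Gramian it suffices to produce the right Sylvester equation that $A$ happens to solve. By Lemma~\ref{lm:syl} we start from
\[ A W_X + W_X A = -BC, \]
and the key observation is that this equation is algebraically symmetric in the pair $(A, W_X)$: each matrix appears once left-multiplied and once right-multiplied by the other. Swapping the roles of ``system matrix'' and ``unknown'' is therefore the natural route.

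Concretely, I would write down the Sylvester equation that any cross Gramian $G$ of the virtual system $(-W_X, B, C)$ must satisfy, namely
\[ (-W_X)\, G + G\, (-W_X) = -BC, \]
and then test the candidate $G = A$. Using the commutativity of matrix addition this reduces to an identity in $A, W_X, B, C$ that agrees, up to an overall sign that can be absorbed either into the right-hand side or into the pair $(B,C)$, with the starting Sylvester equation for $W_X$. Hence $A$ is a solution, and by the forward direction of Lemma~\ref{lm:syl} it qualifies as a cross Gramian of the virtual system $(-W_X, B, C)$.

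The main obstacle, and the reason the statement uses the emphasized indefinite article \textbf{a} rather than \emph{the}, is uniqueness. A Sylvester equation of the form $\tilde A X + X \tilde A = -\tilde B \tilde C$ has a unique solution only when $\tilde A$ and $-\tilde A$ have disjoint spectra, i.e.\ when $\tilde A$ admits no eigenvalue pair $\{\lambda,-\lambda\}$. For $\tilde A = -W_X$ this condition need not hold: the spectrum of a general cross Gramian is not constrained to avoid such pairs, and in fact for symmetric systems the eigenvalues of $W_X$ can include both signs simultaneously. Consequently $A$ is, in general, only one among possibly infinitely many matrices that solve the virtual Sylvester equation, which is precisely what ``a'' cross Gramian records. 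Apart from this caveat the argument is a single-line consequence of Lemma~\ref{lm:syl}.
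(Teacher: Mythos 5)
Your overall strategy is exactly the paper's: its proof of this corollary is the single line ``this is a direct consequence of Lemma~\ref{lm:syl}'', i.e.\ read the Sylvester equation with the roles of $A$ and $W_X$ exchanged. Your discussion of the indefinite article is also a correct and worthwhile gloss: the equation $(-W_X)X+X(-W_X)=-BC$ has a unique solution only if $\sigma(W_X)\cap\sigma(-W_X)=\emptyset$, which a general cross Gramian need not satisfy, and since $-W_X$ need not be Hurwitz, ``cross Gramian'' for the virtual system can only mean ``solution of the associated Sylvester equation'', not the convergent integral of Definition~\ref{def:wx}.

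The gap is the clause ``up to an overall sign that can be absorbed''. Carrying the substitution out: Lemma~\ref{lm:syl} gives $AW_X+W_XA=-BC$, hence
\[
(-W_X)A+A(-W_X)=-(W_XA+AW_X)=+BC\neq -BC,
\]
so $A$ does \emph{not} satisfy the Sylvester equation of the virtual system $(-W_X,B,C)$ unless $BC=0$. What the computation actually shows is that $-A$ is a cross Gramian of $(-W_X,B,C)$, equivalently that $A$ is an (algebraic) cross Gramian of $(W_X,B,C)$ or of $(-W_X,B,-C)$. A sign cannot be ``absorbed into the right-hand side or into $(B,C)$'' without changing which virtual system the claim is about --- and that is the entire content of the statement. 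To be fair, the same sign slip propagates through the paper: step~3 of Section~\ref{sec:isp} solves $W_XA+AW_X=BC$, which for diagonal $W_X>0$ and $BC=BB^\T$ gives $A_{ij}=(BB^\T)_{ij}/(\lambda_i+\lambda_j)$, a \emph{positive} semi-definite matrix by the Schur product theorem, not the negative semi-definite, asymptotically stable $A$ intended; consistency with \eqref{eq:syl} requires solving $W_XA+AW_X=-BC$. So your instinct that a sign needs attention was right, but declaring it absorbable is not a proof step; the clean fix is to state the corollary for $-A$ (or for the virtual system $(W_X,B,C)$), after which your one-line argument goes through verbatim.
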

\begin{proof}
 This is a direct consequence of \href{lm:syl}{Lemma~\ref*{lm:syl}}.
\end{proof}
Hence, for a known cross Gramian $W_X$, input matrix $B$ and output matrix $C$,
the associated system matrix $A$ can be computed as the cross Gramian of the virtual system.
To ensure the (asymptotic) stability of the system, an observation from \cite[Theorem~2.1]{liu98} is utilized.
\begin{mylemma}
 For a state-space symmetric system the cross Gramian is symmetric and positive semi-definite.
\end{mylemma}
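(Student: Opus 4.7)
The plan is to exploit the state-space symmetry assumptions $A = A^\T$ and $B = C^\T$ directly in the integral representation of $W_X$ from \href{def:wx}{Definition~\ref*{def:wx}}, and show that under these hypotheses the cross Gramian collapses to an expression of the same shape as the classical controllability Gramian, from which both claimed properties follow almost immediately.

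First I would rewrite $W_X = \int_0^\infty e^{At} BC \, e^{At} \, \D t$, substitute $C = B^\T$ to turn the middle factor into $BB^\T$, and then use $A = A^\T$ to replace the second exponential $e^{At}$ by $e^{A^\T t}$. This yields
\begin{align*}
 W_X = \int_0^\infty e^{At} B B^\T e^{A^\T t} \, \D t,
\end{align*}
which is precisely the controllability Gramian of $(A,B)$. From this expression, symmetry is immediate: transposing the integrand maps $e^{At} BB^\T e^{A^\T t}$ to itself, so $W_X^\T = W_X$.

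Next, for positive semi-definiteness, for any $v \in \R^N$ I would compute
\begin{align*}
 v^\T W_X v = \int_0^\infty v^\T e^{At} B B^\T e^{A^\T t} v \, \D t
            = \int_0^\infty \| B^\T e^{A^\T t} v \|_2^2 \, \D t \;\geq\; 0,
\end{align*}
where pointwise non-negativity of the integrand yields the conclusion. Convergence of the integral is not an extra obstacle here because the underlying system is assumed asymptotically stable, so $e^{At}$ decays exponentially and the integrand is integrable on $[0,\infty)$.

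The only subtlety worth flagging is ensuring that $W_X$ as defined by the improper integral and $W_X$ as characterized by the Sylvester equation \eqref{eq:syl} really are the same object in this setting; this is standard under asymptotic stability (eigenvalues of $A$ in the open left half plane preclude $\lambda_i + \lambda_j = 0$, giving uniqueness of the Sylvester solution), so no substantive difficulty arises. In short, the proof is essentially a one-line reduction of the cross Gramian to a controllability Gramian, after which symmetry and positive semi-definiteness are the textbook properties of the latter.
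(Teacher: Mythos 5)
Your proof is correct, but it takes a different route from the paper's. The paper argues at the level of the matrix equation: under $A=A^\T$ and $C=B^\T$ the Sylvester equation \eqref{eq:syl} for $W_X$ turns into a Lyapunov equation $AW_X+W_XA^\T = BB^\T$, and the paper then simply cites the standard fact that such a Lyapunov solution is symmetric and positive semi-definite. You instead work directly with the integral representation from Definition~\ref*{def:wx}, observing that state-space symmetry collapses $\int_0^\infty \e^{At}BC\e^{At}\,\D t$ into $\int_0^\infty \e^{At}BB^\T\e^{A^\T t}\,\D t$, i.e.\ the controllability Gramian, and then verify symmetry and positive semi-definiteness by hand via $v^\T W_X v=\int_0^\infty \|B^\T\e^{A^\T t}v\|_2^2\,\D t\ge 0$. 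The two arguments establish the same underlying identification (cross Gramian $=$ controllability Gramian under state-space symmetry), but yours is more elementary and self-contained: it does not appeal to the theory of Lyapunov equations, it is immune to the sign-convention bookkeeping that the equation-based route requires, and it makes explicit the asymptotic-stability hypothesis needed for the improper integral to converge -- a hypothesis the paper's one-line manipulation leaves implicit (and which is anyway satisfied in the context where the lemma is used, namely the inverse Sylvester procedure). The paper's version buys brevity by delegating to known Lyapunov theory; yours buys transparency at the cost of a few extra lines.
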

\begin{proof}
 Given a state-space symmetric system, the associated cross Gramian's Sylvester equation \eqref{eq:syl} becomes a Lyapunov equation:
 \begin{align*}
  A W_X + W_X A = BC \Leftrightarrow A W_X + W_X A^\T = BB^\T,
 \end{align*}
 of which a solution is symmetric and positive semi-definite.
\end{proof}
Thus, an (asymptotically) stable state-space symmetric system can be generated by providing an input matrix $B$,
which determines the output matrix $C = B^\T$ and a symmetric positive semi-definite cross Gramian $W_X$.
A procedure\footnote{See also \texttt{isp.m} in the associated source code archive.} to generate random asymptotically stable state-space symmetric systems, called \textbf{inverse Sylvester procedure},
is given by:

\begin{enumerate}
 \item Sample the cross Gramian's eigenvalues to define a positive definite cross Gramian in balanced form\footnote{A system Gramian in balanced form is a diagonal matrix.} from $\lambda_i = a (\frac{b}{a})^{\mathcal{U}_{[0,1]}}$ with $0<a<b$.
 \item Sample an input matrix $B$ from an iid multivariate standard normal distribution $\mathcal{N}_{0,1}^{N \times M}$ and set the output matrix to $C := B^\T$.
 \item Solve $-W_X A - A W_X = -BC \Leftrightarrow W_X A + A W_X = BC$ for (a negative semi-definite) system matrix $A$.
 \item Sample an orthogonal (un-)balancing transformation $U$ by a QR decomposition of a multivariate standard normally distributed matrix $U = \operatorname{qr}(\mathcal{N}_{0,1}^{N \times N})$.
 \item Unbalance the system by: $U^\T A U$, $U^\T B$, $C U$
\end{enumerate}

\section{Model Reduction Experiments}\label{sec:num}
In this section the Sylvester-equation-based cross Gramian is compared to the empirical cross Gramians in terms of state-space model reduction of a random system generated by the inverse Sylvester procedure.
A test system is generated as state-space symmetric SISO system, $M = Q = 1$, of order $N=1000$ by the inverse Sylvester procedure,
using $a=\frac{1}{10}$, $b=10$, excited by zero-mean, unit-variance Gaussian noise during each time-step and starting from a zero initial state.
Due to the use of empirical Gramians a time horizon of $T = 1$ and a fixed time-step width of $h = \frac{1}{100}$ is selected.
The cross Gramian variants are computed by solving a matrix equation\footnote{Since the system is state-space symmetric, practically a Lyapunov equation is solved.} \eqref{eq:syl}, by the empirical linear cross Gramian \eqref{eq:lwx} and the empirical cross Gramian \eqref{eq:nwx},
from which the reducing projections are obtained using the direct truncation approximate balancing\footnote{Using the SVD of the cross Gramian is equivalent to the eigendecomposition, due to the state-space symmetry.} \eqref{eq:dt} method.
The model reduction error, the error between the FOM (Full Order Model) and ROM (Reduced Order Model) output, is measured in the (time-domain) Lebesgue $L_1$, $L_2$ and $L_\infty$-norms,
\begin{align*}
 \|y-y_r\|_{L_1} &= \int_0^\infty \|y(t)-y_r(t)\|_1 \D t, \\
 \|y-y_r\|_{L_2} &= \sqrt{\int_0^\infty \|y(t)-y_r(t)\|_2^2 \D t}, \\
 \|y-y_r\|_{L_\infty} &= \operatornamewithlimits{ess\,sup}_{t \in [0,\infty)} \|y(t)-y_r(t)\|_\infty,
\end{align*}
as well as in the (frequency-domain) Hardy $H_\infty$-norm and approximately in the Hardy $H_2$-norm.
Due to the use of a state-space symmetric SISO system, twice the truncated tail of singular values not only bounds, but equals the $H_\infty$ error between the original and reduced transfer function $G$ and $G_r$ \cite[Theroem~4.1]{liu98}:
\begin{align*}
 \| G - G_r \|_{H_\infty} = 2 \sum_{i=n+1}^N \sigma_i.
\end{align*}
The $H_2$ error is approximated based on \cite[Remark~3.3]{sorensen02}:
\begin{align}\label{eq:h2}
 \|G - G_r\|_{H_2} \approx \sqrt{\tr(\widetilde{C}_2 W_{X,22} \widetilde{B}_2)},
\end{align}
with the balanced and truncated input and output matrices $\widetilde{B}_2 = B - U U^\T B$ and $\widetilde{C}_2 = C - C U U^\T$ and the truncated square lower right block of the balanced diagonal cross Gramian $W_{X,22}$.

\href{fig:l1}{Figure~\ref*{fig:l1}}, \href{fig:l2}{Figure~\ref*{fig:l2}} and \href{fig:l8}{Figure~\ref*{fig:l8}} show the relative $L_1$, $L_2$ and $L_\infty$ output errors for the classic, empirical linear and empirical cross Gramian using approximate-balancing-based projections over varying reduced state-space dimension up to order $\dim(x_r(t)) = 100$.

\begin{figure}[h!]
 \includegraphics[width=\textwidth]{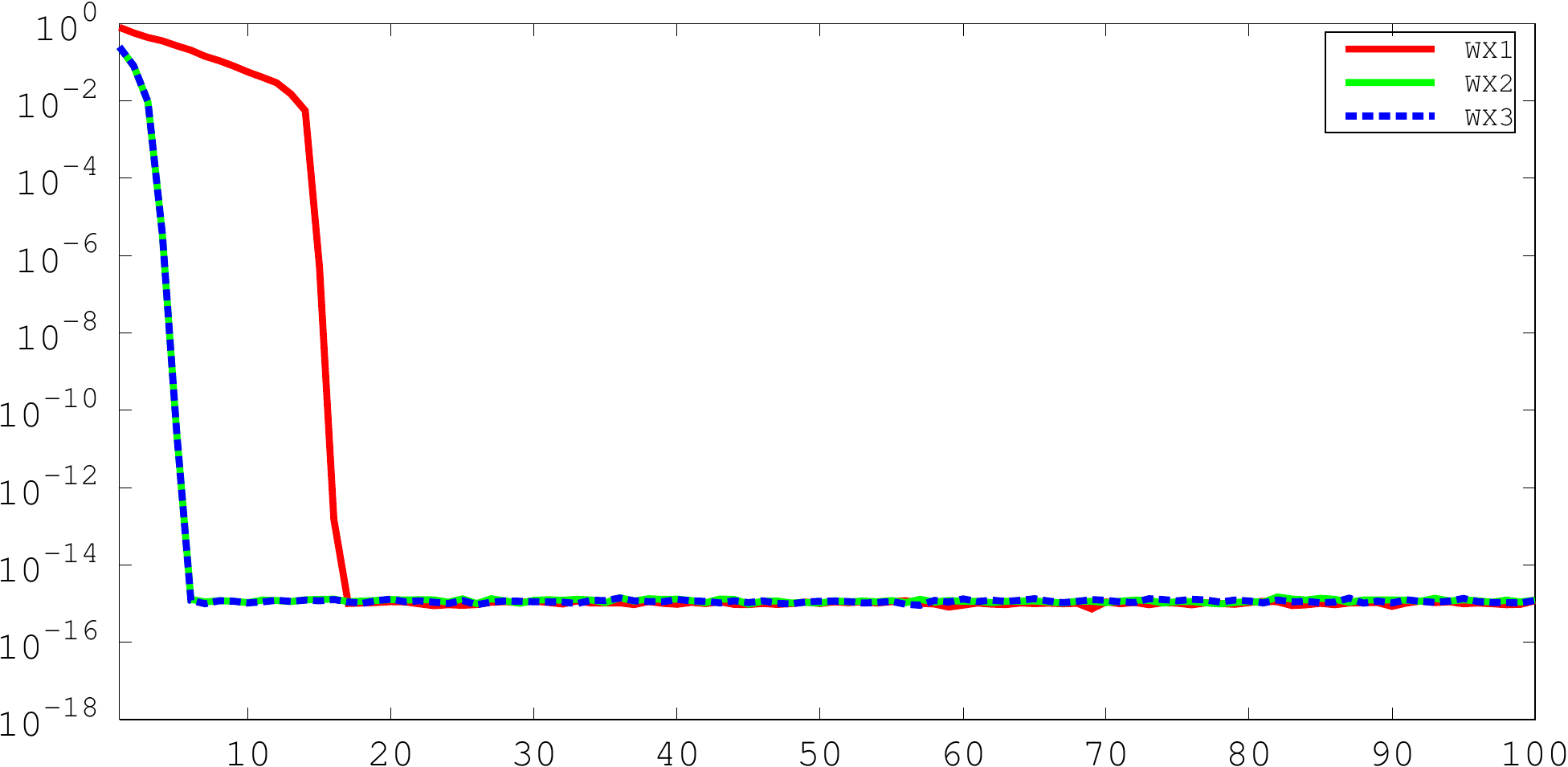}
 \caption{Relative $L_1$ output error between the FOM and ROMs for the matrix equation based cross Gramian $W_{X,1}$, the empirical linear cross Gramian $W_{X,2}$ and the empirical cross Gramian $W_{X,3}$}
 \label{fig:l1}
\end{figure}

\begin{figure}[h!]
 \includegraphics[width=\textwidth]{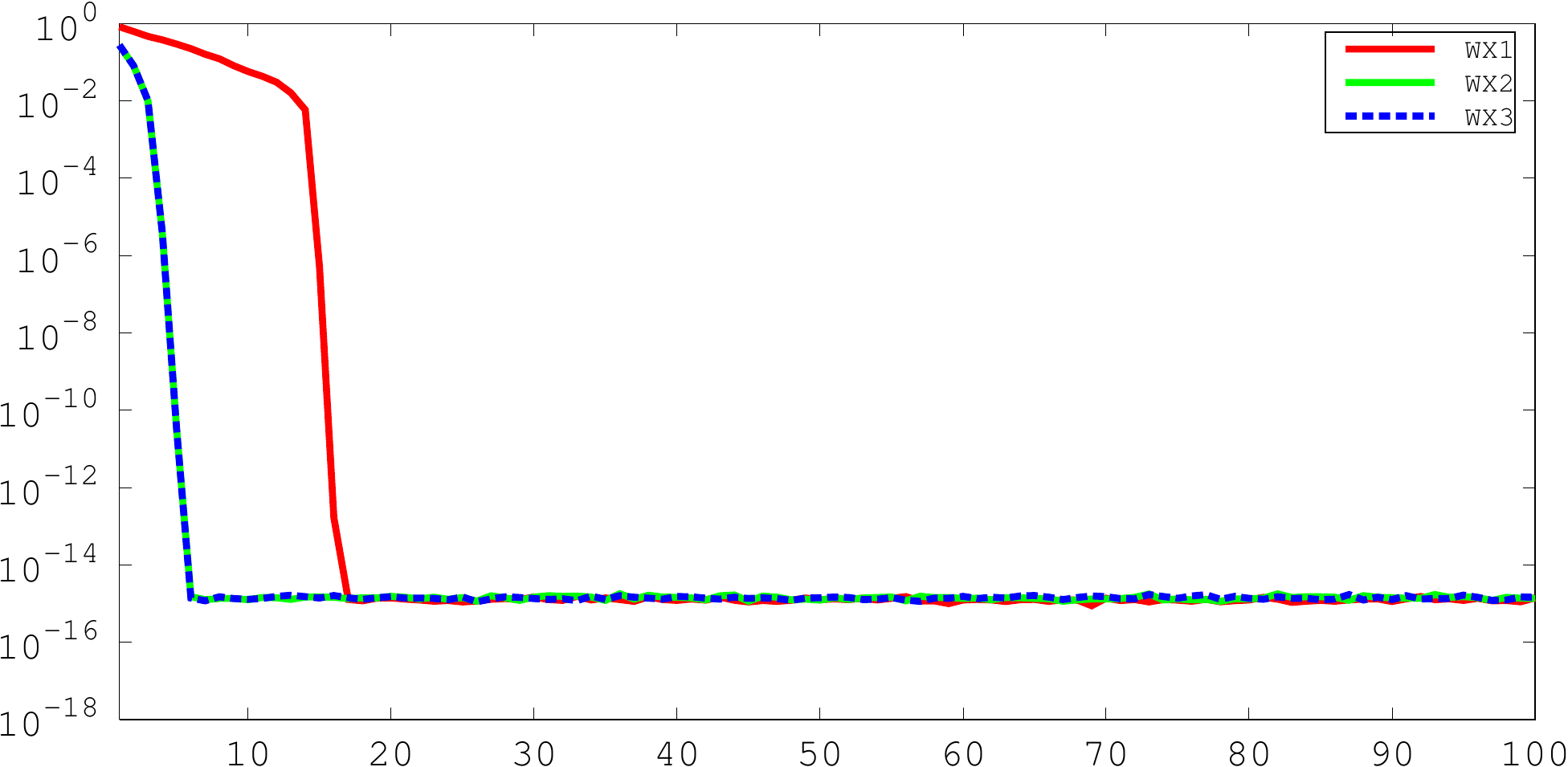}
 \caption{Relative $L_2$ output error between the FOM and ROMs for the matrix equation based cross Gramian $W_{X,1}$, the empirical linear cross Gramian $W_{X,2}$ and the empirical cross Gramian $W_{X,3}$}
 \label{fig:l2}
\end{figure}

\begin{figure}[h!]
 \includegraphics[width=\textwidth]{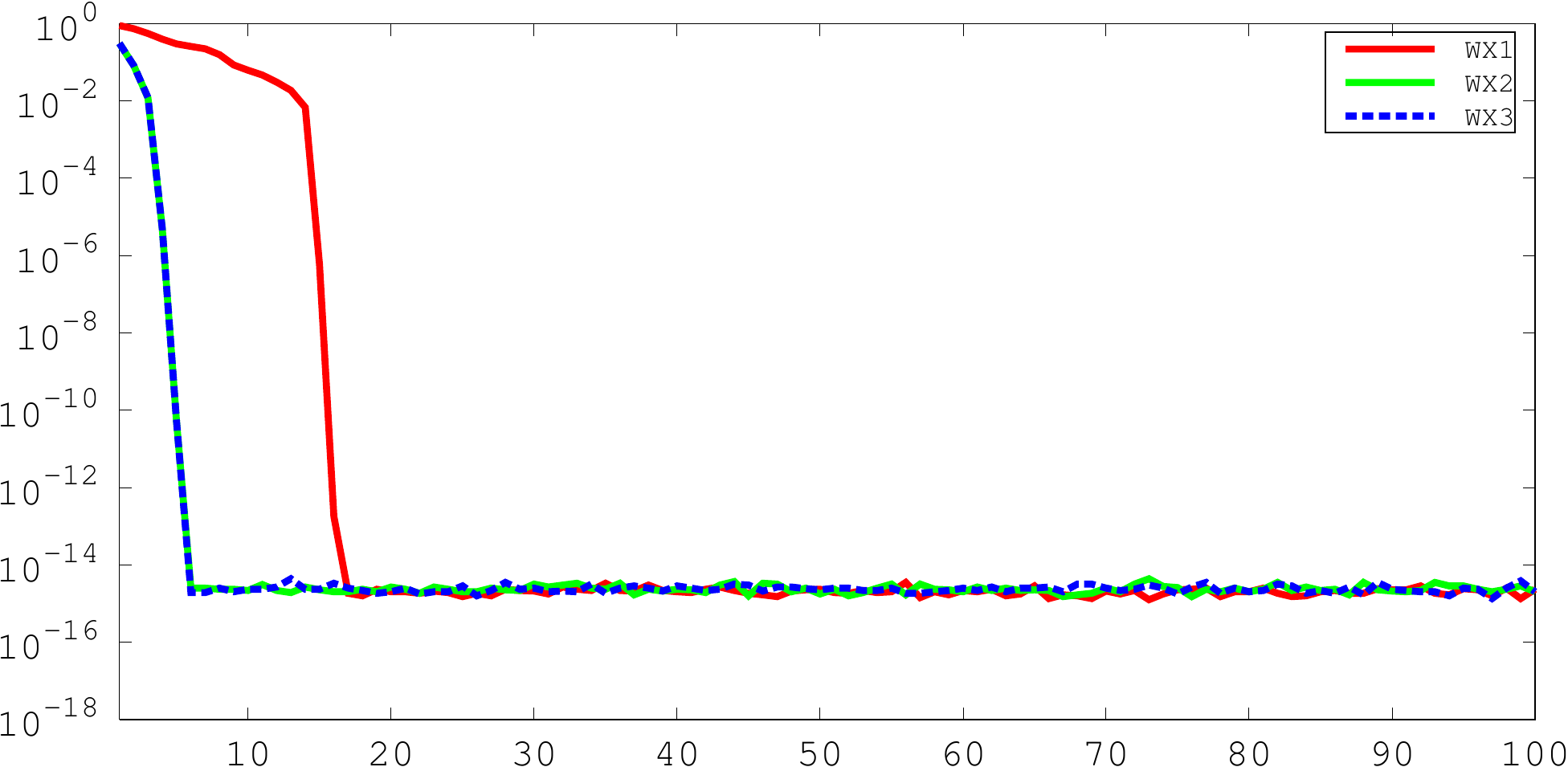}
 \caption{Relative $L_\infty$ output error between the FOM and ROMs for the matrix equation based cross Gramian $W_{X,1}$, the empirical linear cross Gramian $W_{X,2}$ and the empirical cross Gramian $W_{X,3}$}
 \label{fig:l8}
\end{figure}

For all tested cross Gramians, the Lebesgue error measures behave very similarly.
While the output errors for the empirical linear cross Gramian and the empirical cross Gramian decay at a reduced order of $n \ge 7$ to a level near the numerical precision with a similar rate,
the marix equation based cross Gramian reaches this level at $n \ge 18$.
Overall, the model reduces very well and due to the specific time frame for the reduction and comparison and the empirical Gramians yield better results.

In \href{fig:h2}{Figure~\ref*{fig:h2}} and \href{fig:h8}{Figure~\ref*{fig:h8}} the approximate $H_2$ error and the $H_\infty$ error are depicted for the three cross Gramian variants over varying reduced orders up to $\dim(x_r(t)) = 100$.

\begin{figure}[h!]
 \includegraphics[width=\textwidth]{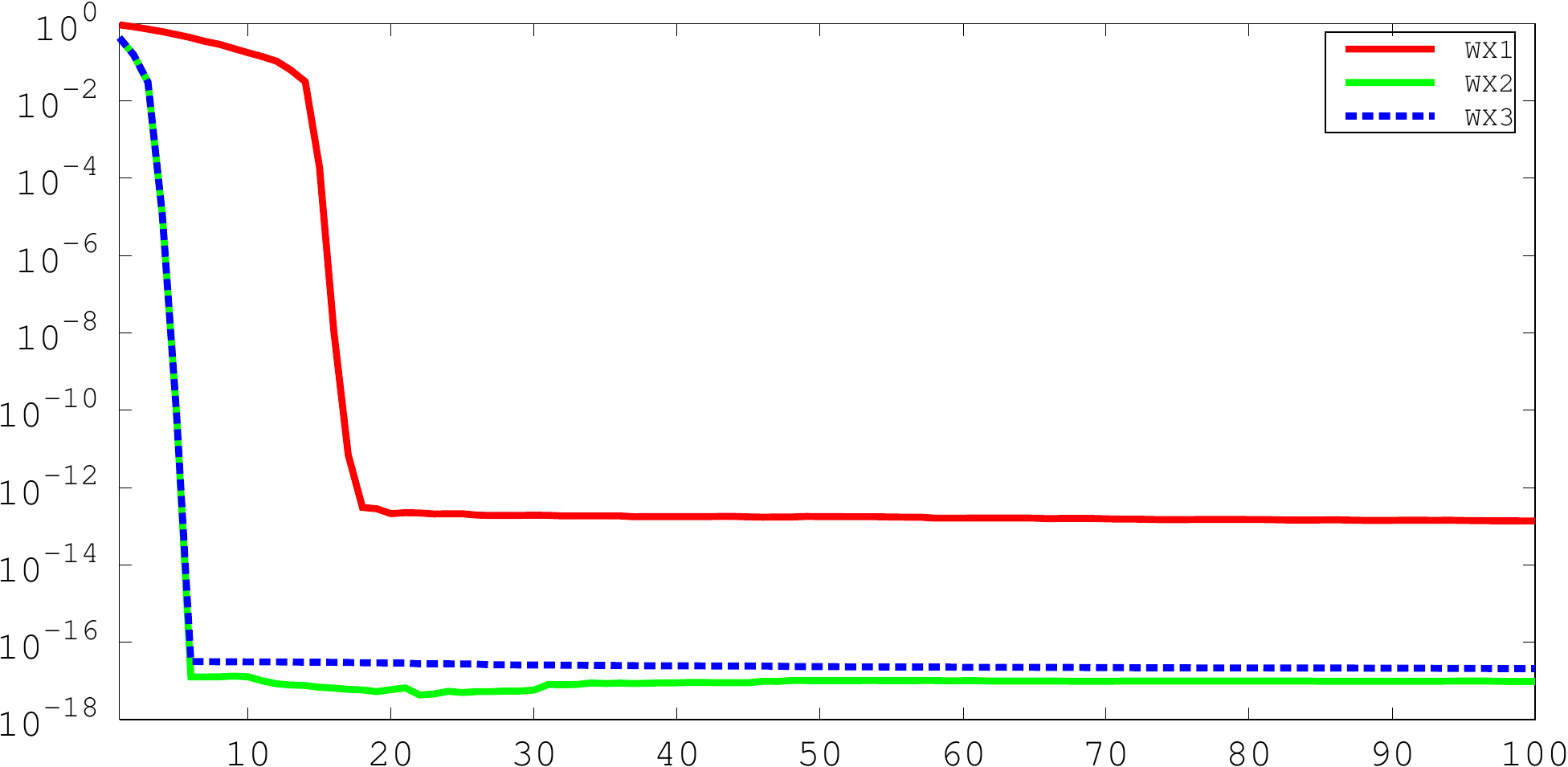}
 \caption{Approximate relative $H_2$ output error between the FOM and ROMs for the matrix equation based cross Gramian $W_{X,1}$, the empirical linear cross Gramian $W_{X,2}$ and the empirical cross Gramian $W_{X,3}$}
 \label{fig:h2}
\end{figure}

\begin{figure}[h!]
 \includegraphics[width=\textwidth]{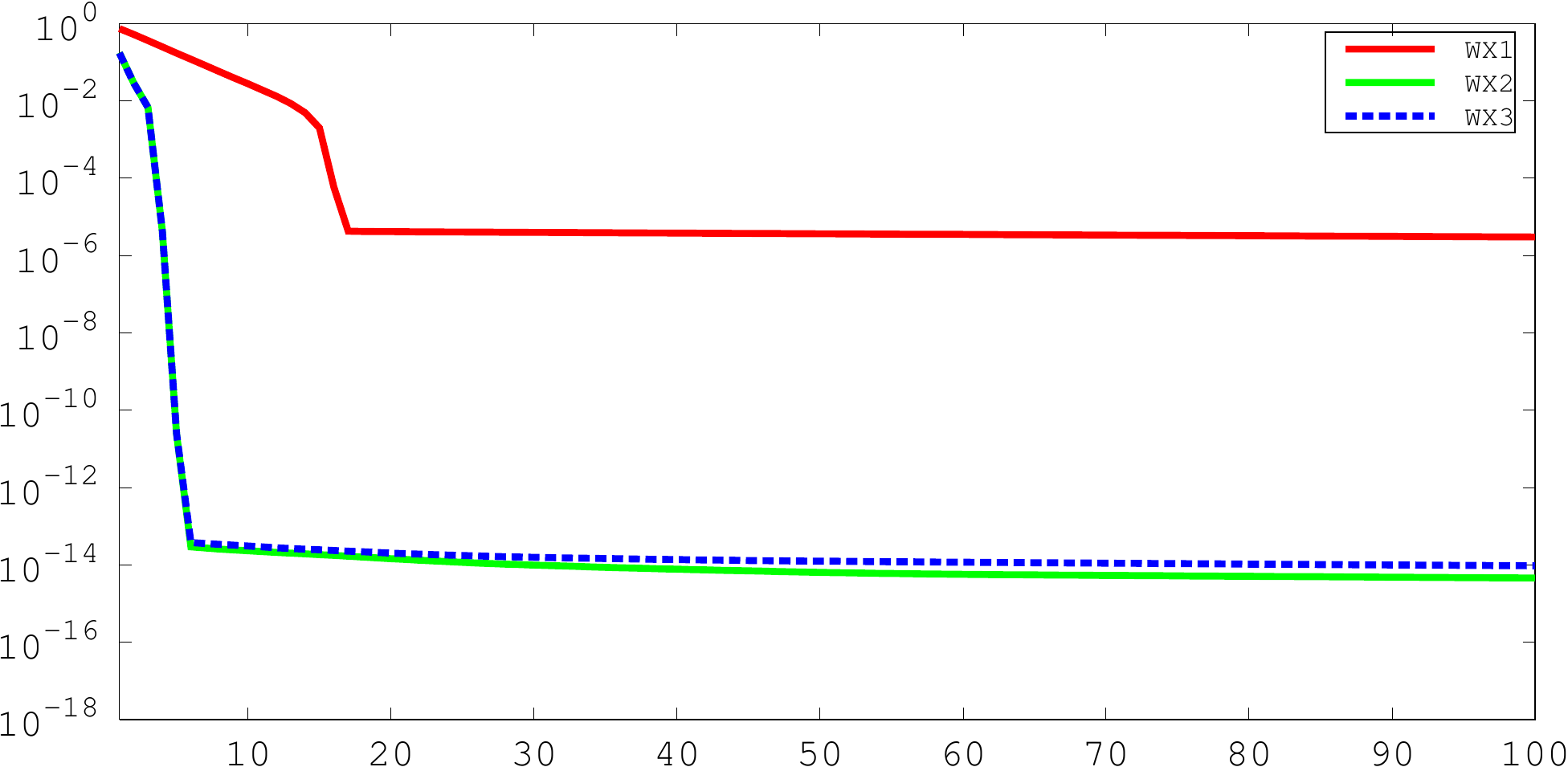}
 \caption{Relative $H_\infty$ output error between the FOM and ROMs for the matrix equation based cross Gramian $W_{X,1}$, the empirical linear cross Gramian $W_{X,2}$ and the empirical cross Gramian $W_{X,3}$}
 \label{fig:h8}
\end{figure}

For the frequency-domain errors the cross Gramian obtained as solution to a Sylvester (Lyapunov) equation does not attain the same accuracy as the empirical cross Gramians, which reach the numerical precision level for $n \ge 6$.
Also, as for the time-domain errors the sharp decay in the output error occurs at a higher reduced order $n \ge 19$ for the non-empirical cross Gramian,
but machine precision is not reached.

\section{Conclusion}
This work summarized the cross Gramian and its empirical variant and assesses methods for cross-Gramian-based model reduction mathematically and numerically.
The latter is conducted by a new cross-Gramian-based random state-space symmetric system generator.
Due to the strict definition of the operating region of the test system, the empirical cross Gramians produce superior reduced order models. 
This confirms the results of \cite{singh05}, that empirical Gramians can convey more information on the input-output behavior for a specific operating region than the classic matrix equation approach.

\section*{Code Availability}
The source code of the implementations used to compute the presented results can be obtained from: \\
\begin{center}
\url{http://www.runmycode.org/companion/view/1854} \\ and is authored by: Christian Himpe.
\end{center}

\section*{Acknowledgement}
This work was supported by the Deutsche Forschungsgemeinschaft: DFG EXC 1003 Cells in Motion - Cluster of Excellence, M\"unster, Germany 
and by the Center for Developing Mathematics in Interaction, DEMAIN, M\"unster, Germany.

\bibliographystyle{plain}
\bibliography{lit}

\end{document}